\newtheorem{global-theorem}{Theorem}
\newtheorem{theorem}{Theorem}[section]
\newtheorem{lemma}[theorem]{Lemma}
\newtheorem{corollary}[theorem]{Corollary}
\newtheorem{proposition}[theorem]{Proposition}
\newtheorem{prop-def}[theorem]{Proposition-Definition}
\newtheorem{lemma-def}[theorem]{Lemma-Definition}
\theoremstyle{definition}
\newtheorem{definition}[theorem]{Definition}
\newtheorem{example}[theorem]{Example}
\newtheorem{remark}[theorem]{Remark}
\newtheorem{notation}[theorem]{Notation}
\numberwithin{equation}{subsection}
\newcommand{\pp}{{\mathbb P}}
\newcommand{\ff}{{\mathbb F}}
\newcommand{\Cc}{{\mathscr C}}
\newcommand{\Dd}{{\mathscr D}}
\newcommand{\Aa}{{\mathscr A}}
\newcommand{\Gg}{{\mathscr G}}
\newcommand{\Hom}{{\rm Hom}}
\def\Hom{\mathrm{Hom}}
\def\Cat{{\mathscr C}{a}{t}}
\begin{document}

\author[Scott Balchin]{Scott Balchin}
\address{Department of Mathematics\\
University of Leicester\\
University Road, Leicester LE1 7RH, England, United Kingdom}
\email{slb76@le.ac.uk}

\title[Factorisation and Cohomology of Higher Categories]{Factorisation and Cohomology of Higher Categories}

\begin{abstract}{We construct an iterative method for factorising small strict $n$-categories into a unique (up to isomorphism) collection of small 1-categories. Following this we develop the theory to include a large class of $\infty$-categories. We use this factorisation to define cohomology theory for higher categories. Finally, we discuss some concrete examples of factorisations of low dimensional categories.}
\end{abstract}

\keywords{Factorisation, Strict $n$-Categories, Cohomology, Dendroidal Set}

\maketitle

\section*{Introduction}

Cohomology theories for small categories were developed by many authors, such as in the work of Watts \cite{watts}, Grothendiek \cite{groth}, Quillen \cite{quillen} and Baues-Wirsching \cite{baues}. Recently G\'alvez-Carrillo, Neumann and Tonks \cite{galvez} developed the theory of Thomason cohomology using Thomason natural systems. This cohomology uses general coefficient systems and was studied in the context of work by Gabriel-Zisman (see \cite{gabriel}).  There has been a recent surge in interest in higher category theory, more specifically that of $\infty$-categories. The goal of this article is to develop cohomology theories for these classes of categories.  Work undertaken by Lurie among others has given such theory for $\infty$-topoi and wider classes of $\infty$-categories, however the latter relies on taking the homotopy category of the $\infty$-category which disregards the higher morphisms \cite{luriebook} \cite{luriebook2}. These cohomology theories use local coefficients, however if we were to utilise the theory of Thomason cohomology we could have cohomology with very general coefficient systems.

In this article we introduce a way to factorise small strict higher categories into a tree like structure of 1-categories, which allows us to use classical methods to construct an $n$-group cohomology for an $n$-category.  Such factorisations have nice functorial properties and uniquely represent the category.  Given such a factorisation, we will look at properties of the shape of the factorisation, where the category of all shapes can be realised as a subcategory of the category of finite rooted trees $\Omega$ which plays a key role in the theory of dendroidal sets, see \cite{dendroidal2}.

The factorisation is given through the use of a shift map which utilises the morphisms in the category, which essentially is a decategorification from the objects up as opposed to the morphisms down as we would if we were taking the homotopy category.  Each shift reduces the categorical dimension by one, we then have reasoning to show that we need not consider each shift, but the homotopy category of each shift, which gives us our collection of 1-categories.

Although this theory was developed to study cohomology with general coefficient systems, it is evident that it may have broader applications, as the factorisation allows us to pass higher categories through classical cohomological machinery intended for 1-categories.

\section{Factorisation of Strict $n$-Categories}
\subsection{Shift Map}
We begin by introducing the so called shift map which allows us to reduce the dimension of a strict $n$-category, let us first introduce the necessary terminology (see \cite{leinster}).

\begin{definition}
A \emph{strict $n$-category} $\Cc$ is a category such that the morphisms are strict $(n-1)$-categories.  We denote by $\Cat_n$ the $(n+1)$-category of strict $n$-categories.
\end{definition}

\begin{remark}
For the rest of the article we will assume that $\Cc$ is a small $n$-category where we index the objects using the natural numbers, $X_i \in \Cc$ for $i \in \mathbb{N}$.
\end{remark}

\begin{definition}
Given $\Cc \in \Cat_n$ we construct a $(n-1)$-category, $s(\Cc)$, from $\Cc$ which we call the \emph{shift} of $\Cc$:

$$s(\Cc)=\coprod_{X_i, X_j \in \mathscr{C}} \Hom(X_i,X_j)$$

\noindent For ease of keeping track of the shifts, instead of writing $s(\Cc)$ we will write $\Cc[1]$.
\end{definition}

\begin{remark}
Note that in the above definition we do not restrict $i$ and $j$ such that $i \leq j$. This is done because $\Hom(X_i,X_j)$ is not necessarily equal to $\Hom(X_j,X_i)$, therefore we need to consider both cases.
\end{remark}

\begin{remark}
If we let $H_1=\Hom(X_i,X_j), H_2=\Hom(X_n,X_m) \in \Cc[1]$, then $\Hom(H_1,H_2)=\emptyset$ unless $i=n$ and $j=m$.  It follows that we get disjoint objects in $\Cc[1]$.  Therefore our above definition has a bit of ambiguity, we have chosen to consider $\Cc[1]$ as just one $(n-1)$-category, however we could consider it as a collection of disjoint ones, which we will be doing when we look at factorisations later.  Although we will use both ways, it should be apparent which one we are using.
\end{remark}

\begin{remark}
We have chosen $\Cc[1]$ to represent the shift map, because if we wanted to consider $s(s(\Cc))$, we can simply refer to it as $(\Cc[1])[1] = \Cc[2]$.  In general the $k$-th shift of $\Cc$ will be denoted by $\Cc[k]$.
\end{remark}

\begin{lemma}
Let $\Cc$ be a small strict $n$-category, then $\Cc[k]$ is defined for all $0 \leq k < n$
\end{lemma}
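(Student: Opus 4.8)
The plan is to proceed by induction on $k$, using the recursive structure built into the definition of the shift map. The key observation is that applying the shift operation to a strict $m$-category always produces a strict $(m-1)$-category, so the only thing that can go wrong is running out of categorical dimension. First I would establish the base case: for $k = 0$ we have $\Cc[0] = \Cc$, which is defined since $\Cc$ is a strict $n$-category by hypothesis, and $0 < n$ holds whenever $n \geq 1$ (the case $n = 0$ being vacuous since there are no $k$ with $0 \leq k < 0$). For $k = 1$, I would invoke the definition of $s(\Cc) = \Cc[1]$ directly: since $\Cc \in \Cat_n$, each $\Hom(X_i, X_j)$ is a strict $(n-1)$-category, and a coproduct of strict $(n-1)$-categories is again a strict $(n-1)$-category (or, in the alternate reading, a collection of such), so $\Cc[1] \in \Cat_{n-1}$, which requires only $n - 1 \geq 0$, i.e. $1 < n$ or $n = 1$ with the range again handled correctly.

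For the inductive step, suppose $1 \leq k < n$ and that $\Cc[k-1]$ is defined and is a strict $(n - (k-1))$-category, i.e. an object of $\Cat_{n-k+1}$. Since $k < n$ we have $n - k + 1 \geq 2 \geq 1$, so $\Cc[k-1]$ is genuinely a category whose hom-objects are strict $(n-k)$-categories. Applying the shift map to $\Cc[k-1]$, which is legitimate precisely because $\Cc[k-1]$ has positive categorical dimension, yields $\Cc[k] = (\Cc[k-1])[1] = \coprod \Hom(H_1, H_2)$, a coproduct of strict $(n-k)$-categories, hence an object of $\Cat_{n-k}$; and $n - k \geq 0$ since $k < n$. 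This closes the induction and shows $\Cc[k]$ is defined (as a strict $(n-k)$-category) for every $k$ in the stated range.

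I do not expect a serious obstacle here, since the statement is essentially a bookkeeping consequence of the definitions; the content is entirely in tracking the categorical dimension. The one subtlety worth a sentence of care is the ambiguity flagged in the remarks — whether $\Cc[1]$ is a single $(n-1)$-category or a disjoint collection of them — but this does not affect the argument: in either reading the shift map is well-defined and lowers dimension by exactly one, and in the collection reading one simply applies the single-category case componentwise. A second minor point is the boundary: one should note that when $k = n$ the object $\Cc[n]$ would be a strict $0$-category (a set), which lies outside $\Cat$ of positive dimension and can no longer be shifted, explaining why the range stops strictly below $n$. If one wished, one could also phrase the whole argument non-inductively by observing that $\Cc[k]$ is the coproduct over all composable-free strings $X_{i_0}, \dots$ of iterated hom-categories, but the induction is cleaner and matches the recursive definition directly.
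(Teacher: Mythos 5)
Your proof is correct and takes essentially the same route as the paper's: both arguments simply track that each application of the shift lowers the categorical dimension by exactly one, so the process terminates at $\Cc[n-1]$, which is a 1-category. Your induction merely makes the paper's one-line dimension count explicit, with the same boundary observation that shifting past $k=n-1$ would leave the setting of categories of positive dimension.
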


\begin{proof}
By definition, the shift map reduces the dimension by one each time it is applied.  Therefore it suffices to work out at which shift we get elements which are 1-categories. Clearly $\Cc[n-1]$ would have categorical dimension $n-(n-1)=1$.  Therefore we can only apply the shift map $n-1$ times.
\end{proof}

\subsection{Factorisation Trees and Planes}

Now that we have defined the shift map, we want to be able to use it to construct a factorisation for $\Cc$. $\Cc[1]$ can be regarded as many disjoint $(n-1)$-categories, and this is going to be an essential part of the construction.  The general idea is to factor $\Cc$ into its morphisms which are of one categorical dimension less, we then apply the shift map to each of the $\Cc[1]$ categories to factor each one of them.  We continue this process $(n-1)$ times until we reach a collection of 1-categories.  This process has a clear tree like structure which we will be discussing in detail later.

\begin{definition}
Let $\Cc$ be a strict $n$-category.  We inductively define the \emph{factorisation tree of $\Cc$}, which we denote by $\ff(\Cc)$, by the following procedure:

\begin{itemize}
\item The initial vertex is $\Cc$.
\item The second level has a vertex for each disjoint part of $\Cc[1]$ all of which are connected to the previous vertex.
\item The third level has a vertex for each disjoint part of $\Cc[2]$ which are connected to the vertex in the second level which they are the shift of.
\item Given the $k$-th level which is $\Cc[k-1]$, we attach the $\Cc[k]$ to the above vertices based on which one they are the shift of, which gives us the $(k+1)$-th level.
\end{itemize}
\end{definition}

\begin{remark}
Although in the definition we have said "vertex", we do not disregard what the category in question is, we still want to keep track of the category at each vertex.
\end{remark}

It is clear that $\ff(\Cc)$ uniquely determines $\Cc$, because we still have the initial category as our top vertex, therefore we repeat information. We wish to reduce the amount of information in the factorisation, but for it to still uniquely determine $\Cc$.

The first idea is to consider just the bottom row, however there is no unique way to embed a collection of 1-categories $\Cc_i$ into a strict 2-category $\Dd$ such that $\Dd[1]=\coprod \Cc_i$. An example of this is shown in the appendix. However we can see that if we knew the the objects of $\Dd$ then such an embedding would be unique (up to isomorphism). Therefore what we do is take the homotopy category at all levels of the shift.

\begin{definition}
Let $\Cc$ be a strict $n$-category, then the \emph{factorisation plane of $\Cc$}, denoted by $\pp(\Cc)$ is obtained by taking the homotopy category of every vertex of $\ff(\Cc)$.
\end{definition}

\begin{proposition}
Let $\Cc$ be a strict $n$-category, then $\pp(\Cc)$ uniquely determines $\Cc$ up to isomorphism.
\end{proposition}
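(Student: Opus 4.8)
The plan is to argue by induction on the categorical dimension $n$, exploiting the recursive way in which $\ff(\Cc)$, and hence $\pp(\Cc)$, is built: the subtree of $\ff(\Cc)$ hanging below a level-two vertex is again a factorisation tree. Concretely, if $H=\Hom(X_i,X_j)$ is one of the disjoint pieces of $\Cc[1]$, then because $\Hom(H_1,H_2)=\emptyset$ for distinct pieces, every higher shift $\Cc[k+1]$ decomposes as a coproduct over the pieces $H$, and the part lying below $H$ is precisely $H[k]$. Hence the subtree of $\ff(\Cc)$ rooted at $H$ is canonically $\ff(H)$, and, applying $\Ho$ vertexwise, the subtree of $\pp(\Cc)$ rooted at $\Ho(H)$ is $\pp(H)$. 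So $\pp(\Cc)$ amounts to a root $\Ho(\Cc)$, a labelling of the level-two vertices by ordered pairs of objects of $\Cc$ (carried by the tree structure, exactly as in the definition of $\ff$), and, below each such vertex, the plane $\pp(\Hom(X_i,X_j))$ of the corresponding hom-$(n-1)$-category.

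First I would settle the base case $n=1$: by the Lemma, $\ff(\Cc)$ is the single vertex $\Cc$, so $\pp(\Cc)=\Ho(\Cc)=\Cc$ since a $1$-category is its own homotopy category, and the claim is immediate. For the inductive step, assume the proposition for strict $(n-1)$-categories and let $\Cc$ be a strict $n$-category. From $\pp(\Cc)$ I read off the object set of $\Cc$, since $\Ho(\Cc)$ has the same objects as $\Cc$; and for each ordered pair $(X_i,X_j)$ the subtree below the corresponding level-two vertex is $\pp(\Hom(X_i,X_j))$, which by the inductive hypothesis recovers the strict $(n-1)$-category $\Hom(X_i,X_j)$ up to isomorphism. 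It then remains to reconstruct from this data the compositional structure of $\Cc$ — the composition functors $\Hom(X_j,X_k)\times\Hom(X_i,X_j)\to\Hom(X_i,X_k)$, the identities, and the strict unit and associativity laws — and to check that the resulting $n$-category is, up to isomorphism, independent of all choices made, so that an isomorphism of planes $\pp(\Cc)\cong\pp(\Dd)$ assembles into an isomorphism $\Cc\cong\Dd$.

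The hard part is exactly this reconstruction of the gluing. This is what motivated passing from $\ff$ to $\pp$ in the first place: a bare family of lower-dimensional pieces sitting as a shift does not pin down the ambient higher category (see the example in the appendix), but — and this is the claim to be proved carefully — once the object set is known the ambient category is determined up to isomorphism. I would establish this first for strict $2$-categories, showing that two such with the same objects and the same family of hom-categories indexed over the same ordered pairs must be isomorphic, and then feed it into the induction; the essential bookkeeping is that $\pp(\Cc)$ records which ordered pair $(X_i,X_j)$ each piece belongs to. The subtlety to be careful about is that the isomorphisms of hom-categories produced by the inductive hypothesis must be chosen simultaneously and compatibly with composition, so that they glue to a genuine isomorphism of $n$-categories rather than merely a levelwise one; granting this, an isomorphism $\pp(\Cc)\cong\pp(\Dd)$ is precisely an object bijection together with such a compatible family, and the conclusion follows.
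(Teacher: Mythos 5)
There is a genuine gap, and you have in fact pointed at it yourself. Your induction cleanly reduces the statement to the following key claim: a strict $n$-category is determined up to isomorphism by its set of objects together with the family of hom-$(n-1)$-categories $\Hom(X_i,X_j)$ (recovered, by induction, from the subtrees of $\pp(\Cc)$). But you never prove this claim — you say you ``would establish this first for strict $2$-categories'' and then conclude with ``granting this, \dots the conclusion follows.'' The trouble is that the claim, as you state it, is not true: the data of objects plus hom-categories omits the composition functors $\Hom(X_j,X_k)\times\Hom(X_i,X_j)\to\Hom(X_i,X_k)$, and these are genuinely extra structure. Already for a one-object strict $2$-category, i.e.\ a strict monoidal category, the objects-plus-hom data is just the underlying category $\Hom(X,X)$, and a fixed category can carry non-isomorphic strict monoidal structures; the plane adds only the root $\Ho(\Cc)$, which records composition of $1$-morphisms merely up to $2$-isomorphism, not the composition of higher cells. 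So your proposed ``compatible choice of isomorphisms of hom-categories'' has nothing in $\pp(\Cc)$ to be compatible with — the plane simply does not carry the gluing data your argument needs, and this is exactly where the proof would break.

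For comparison: the paper offers no formal proof of this proposition at all. Its justification is the informal discussion preceding the statement and the appendix example, which argue only that knowing the homotopy category (hence the objects and how they are connected) removes the particular ambiguity exhibited there; it never addresses the composition functors either. So your attempt is actually more honest about the structure of what must be shown — induction on $n$ with the hom-wise decomposition of the tree is the right skeleton — but the decisive step (reconstructing the compositional structure from $\pp(\Cc)$, or showing it is irrelevant up to isomorphism) is missing in both your proposal and the paper, and as stated it is the step most likely to fail rather than merely to require bookkeeping.
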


\begin{remark}
When considering the factorisation, we took every shift, however we could consider a coarser factorisation by considering every $k$-th shift, although this would not uniquely factorise the category.
\end{remark}

\subsection{Functorial Properties of the Shift Map}
 We want to study the functorial properties of the shift map, and subsequently the factorisation of higher categories \cite{maclane}.

 \begin{proposition}
 Let $F: \Cc \to \Dd$ be a functor between strict $n$-categories, then there is an induced functor $F^\ast : \ff(\Cc) \to \ff(\Dd)$ which acts in a natural way on each vertex
 \end{proposition}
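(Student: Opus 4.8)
The plan is to build $F^\ast$ level by level, mirroring the inductive construction of $\ff(\Cc)$, and to check at each level that $F^\ast$ respects both the vertex-labelling (the category sitting at each vertex) and the edge structure (which vertex is the shift of which). At the top level, the initial vertex of $\ff(\Cc)$ is $\Cc$ itself and of $\ff(\Dd)$ is $\Dd$, so $F^\ast$ sends the root to the root and we attach $F$ as the map on that vertex. For the second level, recall $\Cc[1]=\coprod_{X_i,X_j}\Hom_{\Cc}(X_i,X_j)$; since $F$ is a functor it sends objects to objects and, for each pair $X_i,X_j$, restricts to an $(n-1)$-functor $F_{i,j}:\Hom_{\Cc}(X_i,X_j)\to\Hom_{\Dd}(FX_i,FX_j)$. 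This is exactly a map of the disjoint pieces, hence a functor $\Cc[1]\to\Dd[1]$ taking the disjoint part indexed by $(X_i,X_j)$ to the one indexed by $(FX_i,FX_j)$; on the tree this is the required map on second-level vertices, compatible with the edges down from the root.

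Next I would iterate: a functor between strict $n$-categories induces, by the previous paragraph applied one categorical dimension down, a functor between their shifts, and this is functorial in the sense that it is compatible with further shifting. Concretely, given the $k$-th level $\Cc[k-1]$ with its already-constructed map to $\Dd[k-1]$, each disjoint piece of $\Cc[k-1]$ is a strict $(n-k+1)$-category mapping to a disjoint piece of $\Dd[k-1]$, and shifting each such piece gives the map on the $(k+1)$-th level; by construction the new vertices are attached below exactly the vertices that map to the images of their parents, so the edge structure is preserved. After $n-1$ steps we have defined $F^\ast$ on every vertex and edge of $\ff(\Cc)$, and on each vertex it is literally an iterated $\Hom$-restriction of $F$, which is the precise meaning of ``acts in a natural way on each vertex''. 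Finally I would note that $\mathrm{id}_{\Cc}^\ast=\mathrm{id}_{\ff(\Cc)}$ and $(G\circ F)^\ast=G^\ast\circ F^\ast$, since both reduce to the corresponding identities for the $\Hom$-restrictions at each level, so the assignment $\Cc\mapsto\ff(\Cc)$, $F\mapsto F^\ast$ is itself functorial.

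The main obstacle is bookkeeping rather than mathematics: one must be careful that the index set of disjoint pieces of $\Cc[k]$ is really the set of chains $(X_{i_0},X_{i_1},\dots)$ of objects at successive levels, and that $F$ acts on these chains componentwise, so that the map on $(k+1)$-level vertices is well-defined and genuinely lands in the correct piece of $\Dd[k]$. One should also be slightly careful that ``taking $\Hom$'' of a strict $(n-k)$-category is compatible with $F$ only because strict functors preserve composition strictly; no coherence data intervenes, which is what makes the inductive step clean. Since the homotopy-category operation is itself functorial, the same argument upgrades verbatim to give an induced map $F^\ast:\pp(\Cc)\to\pp(\Dd)$ on factorisation planes, though that is not needed for the present statement.
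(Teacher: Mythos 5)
Your proposal is correct and takes essentially the same route as the paper: the key point in both is that a strict functor $F$ restricts to functors $\Hom_{\Cc}(X_i,X_j)\to\Hom_{\Dd}(FX_i,FX_j)$ on hom-categories, and applying this vertex-by-vertex (iterated through the shifts) yields $F^\ast:\ff(\Cc)\to\ff(\Dd)$. You merely spell out the induction over levels, the indexing of disjoint pieces, and the compatibility with the edge structure, which the paper's brief proof leaves implicit.
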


 \begin{proof}
 Functors are defined in such a way that they act nicely on the morphisms of the category, that is given an arrow \xymatrix{A \ar[r]^f & B} in $\mathscr{C}$, there is an arrow \xymatrix{FA \ar[r]^{Ff} & FB}. It follows that we have a naturally induced functor $F^\ast : \Hom(A,B) \to \Hom(FA,FB)$.  We can apply $F^\ast$ to each vertex of the factorisation to get the functor $F^\ast : \ff(\Cc) \to \ff(\Dd)$ as required.
 \end{proof}

 \begin{corollary}
Let $F: \Cc \to \Dd$ be a functor between strict $n$-categories, then there is an induced functor $F^\star : \pp(\Cc) \to \pp(\Dd)$ which acts in a natural way on each vertex
 \end{corollary}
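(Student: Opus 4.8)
The plan is to obtain $F^\star$ by composing the functor $F^\ast\colon \ff(\Cc)\to\ff(\Dd)$ of the previous proposition with the homotopy-category construction, vertex by vertex, and then to check that this composite respects the tree structure. First I would recall what $F^\ast$ actually does: every vertex of $\ff(\Cc)$ carries a genuine category, namely a disjoint component $H$ of some shift $\Cc[k]$, and by the previous proposition $F^\ast$ sends it to a vertex of $\ff(\Dd)$ carrying a component $F^\ast(H)$ of $\Dd[k]$, equipped with a functor $F^\ast_H\colon H\to F^\ast(H)$. These functors are the iterated Hom-functors $\Hom(A,B)\to\Hom(FA,FB)$, so they are available at every level simultaneously and are compatible with edges: if $H'$ is a component of $H[1]$, then $F^\ast_{H'}$ is the appropriate restriction of the shift of $F^\ast_H$.

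Next I would use that the homotopy-category construction is itself a functor $\Ho\colon \Cat\to\Cat$ (it is left adjoint to the inclusion of $1$-categories), and that $\pp(\Cc)$ is by definition the tree obtained by applying $\Ho$ to each vertex of $\ff(\Cc)$. Applying $\Ho$ to each functor $F^\ast_H$ gives a functor $\Ho(F^\ast_H)\colon \Ho(H)\to\Ho(F^\ast(H))$ between the corresponding vertices of $\pp(\Cc)$ and $\pp(\Dd)$. I would then define $F^\star$ to agree with $F^\ast$ on the underlying trees and to be $\Ho(F^\ast_H)$ on each vertex. The identity and composition axioms for $F\mapsto F^\star$ are then inherited at once from those for $F\mapsto F^\ast$ together with the functoriality of $\Ho$, and the naturality on each vertex is exactly that of $F^\ast$ transported through $\Ho$.

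The step that needs real care, and which I expect to be the main obstacle, is edge-compatibility after passing to homotopy categories: one must verify that the square relating $\Ho(F^\ast_H)$, the edge map from $\Ho(H)$ to the relevant component of $\Ho(H)[1]$ in $\pp(\Cc)$, and $\Ho(F^\ast_{H'})$ actually commutes. Here the subtlety is that the edges of $\pp(\Cc)$ are built by first shifting and then applying $\Ho$, whereas the maps coming from $F^\ast$ are built by shifting the functor $F^\ast_H$ directly; commutativity therefore amounts to checking that $\Ho$ interacts suitably with the shift map. This reduces to the functoriality of $\Ho$ combined with the edge-compatibility of $F^\ast$ already established in the previous proposition, but it is the point where the argument is more than a formal consequence of "apply a functor to a functor", so it is where I would spend most of the effort.
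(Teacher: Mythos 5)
Your proposal is correct and follows essentially the same route as the paper, whose proof simply says that $F^\star$ is $F^\ast$ acting only on the objects and 1-morphisms, i.e.\ the vertexwise passage to homotopy categories that you spell out via the functor $\Ho$. Your additional discussion of edge-compatibility of $\Ho$ with the shift is a reasonable elaboration of a point the paper leaves implicit, but it is not a different argument.
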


\begin{proof}
This follows from the above theorem, $F^\star$ is $F^\ast$ acting only on the objects and 1-morphisms.
\end{proof}

 \begin{corollary}
 Let $\Cc \simeq \Dd$ be an equivalence of strict $n$-categories, then there is an equivalence $\ff(\Cc) \simeq \ff(\Dd)$
 \end{corollary}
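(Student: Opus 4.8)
The plan is to build the equivalence directly from a pseudo-inverse of $F$ together with the functoriality of $\ff$ established in the Proposition on induced functors of factorisation trees. Choose functors $F\colon\Cc\to\Dd$ and $G\colon\Dd\to\Cc$ witnessing the equivalence, with natural isomorphisms $\eta\colon GF\cong\mathrm{id}_\Cc$ and $\varepsilon\colon FG\cong\mathrm{id}_\Dd$. By that Proposition these induce functors $F^\ast\colon\ff(\Cc)\to\ff(\Dd)$ and $G^\ast\colon\ff(\Dd)\to\ff(\Cc)$ acting vertexwise, so what remains is to check that together they form an equivalence of factorisation trees: that $F^\ast$ matches the disjoint parts occurring at each level of the two trees (up to the relabelling of objects induced by $F$), that its restriction to every matched pair of vertices is an equivalence of categories, and that $G^\ast$ supplies a vertexwise quasi-inverse.

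The only external input needed is the standard fact that an equivalence of strict $n$-categories is essentially surjective on objects and restricts, for each pair $X_i,X_j$ of objects of $\Cc$, to an equivalence of strict $(n-1)$-categories $\Hom(X_i,X_j)\to\Hom(FX_i,FX_j)$. First I would apply this at the second level of the trees: essential surjectivity of $F$ gives, for every disjoint part $\Hom(Y_k,Y_l)$ of $\Dd[1]$, indices $i,j$ and isomorphisms $Y_k\cong FX_i$, $Y_l\cong FX_j$, hence an equivalence $\Hom(Y_k,Y_l)\simeq\Hom(X_i,X_j)$, while conversely $F^\ast$ sends each disjoint part of $\Cc[1]$ to such a part of $\Dd[1]$. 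Thus $F^\ast$ induces a bijection-up-to-equivalence between the children of the root of $\ff(\Cc)$ and those of the root of $\ff(\Dd)$, compatible with the edges, and an equivalence on each vertex.

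Next I would iterate over the height of the tree. A vertex at level $k+1$ of $\ff(\Cc)$ is a disjoint part of $\Cc[k]$, sitting as a hom-$(n-k)$-category inside a vertex $V$ at level $k$; by induction the restriction of $F^\ast$ to $V$ is an equivalence of $(n-k+1)$-categories, so applying the external fact one categorical dimension lower shows that $F^\ast$ restricted to each child of $V$ is again an equivalence of $(n-k)$-categories and that the children of $V$ are carried bijectively-up-to-equivalence onto those of $F^\ast(V)$. This closes the induction, so $F^\ast$ and $G^\ast$ respect the whole trees and are vertexwise equivalences; finally $\eta$ and $\varepsilon$ pass through the same hom-restriction argument to give, level by level, natural isomorphisms $G^\ast F^\ast\cong\mathrm{id}$ and $F^\ast G^\ast\cong\mathrm{id}$ on every vertex, which is the desired equivalence $\ff(\Cc)\simeq\ff(\Dd)$.

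I expect the genuine difficulty to be organisational rather than conceptual. Because $F$ need not be injective or surjective on objects, the two trees need not be isomorphic as bare trees --- two isomorphic objects of $\Cc$ spawn four disjoint hom-parts where a single object of $\Dd$ spawns only one --- so ``equivalence of factorisation trees'' must be taken to mean a correspondence of disjoint parts at each level together with vertexwise equivalences, and one has to verify that the object-isomorphisms furnished by essential surjectivity can be propagated coherently through successive shifts. The cleanest route is to replace $\Cc$ and $\Dd$ by skeletal representatives first, reducing to the case where $F$ is bijective on objects and the underlying trees are literally isomorphic, and then to remark that the general statement follows because $\ff$ carries an equivalence of categories to an equivalence at every vertex.
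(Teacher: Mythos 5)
Your proposal follows essentially the same route as the paper: lift the equivalence to $F^\ast$ via the proposition on induced functors and check that it is an equivalence vertexwise, which is exactly what the paper's (one-sentence) proof asserts. Your additional care --- restricting to hom-categories, propagating essential surjectivity through the levels, and noting that the underlying trees need not be isomorphic so that ``equivalence of factorisation trees'' must be interpreted as a correspondence of parts with vertexwise equivalences --- fills in details the paper leaves entirely implicit, and is correct.
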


\begin{proof}
If $F:\Cc \to \Dd$ is the functor giving the equivalence, then $F^\ast$ will be an equivalence on each part of the factorisation.
\end{proof}

\subsection{Example: Strict 2 and 3-Groups}

\begin{definition}
A \emph{strict $n$-group} is a group object internal to $(n-1)$-$\Gg rpd$, the $(n-1)$-dimensional category of strict groupoids \cite{garzon}.
\end{definition}

We begin by looking at the factorisations of $2$-groups, this is a relatively simple procedure as we only have one object in $\mathscr{C}$.  Let $\mathscr{G}$ be a strict $2$-group, then $\mathscr{G}[1]= \coprod \Hom(X_i,X_j) = \Hom(X,X)$.  That is the shift of $\mathscr{G}$ only has one category in it, which is a groupoid. If we represent by $\bullet$ a category, then $\mathbb{F}(\mathscr{G})$ has the following shape

\centerline{\xymatrix{ \mathscr{G} & \bullet \ar[d] \\
 \mathscr{G}[1] & \bullet}}

This case is trivial, the more interesting shapes arise when we deal with strict higher groups.  If we consider a strict $3$-group $\mathscr{G}$, then its first shift is the same as in the $2$-group case.  However the category in $\mathscr{G}[1]$ will have more than one object (unless it is the identity 3-group), and therefore $\mathscr{G}[2]$ will have more than one category in it. The shape for it is as in the following diagram:

\centerline{\xymatrix{ \mathscr{G} &&& \bullet \ar[d] \\
 \mathscr{G}[1] &&& \bullet \ar[d]  \ar[dll] \ar[dl] \ar[dr] \ar[drr]\\
 \mathscr{G}[2] & \bullet & \bullet & \cdots & \bullet & \bullet}}

 \begin{remark}
 In the above example we said that if $\mathscr{G}$ was the identity $3$-group, or the strict identity $3$-category, then we will only get one category in $\mathscr{G}[2]$.  This is easily realised for all identity categories.
\end{remark}

 \begin{definition}
 The \emph{strict identity $n$-category} is the category with only identity morphisms in each dimension, we will denote it by $\mathscr{I}_n$
 \end{definition}

 Let $\mathscr{I}_n$ be the identity $n$-category, then $\mathbb{F}(\mathscr{I}_n)$ has the following linear shape

 \centerline{\xymatrix{ \mathscr{I} & \bullet \ar[d] \\
 \mathscr{I}[1] & \bullet \ar[d]\\
 \mathscr{I}[2] & \bullet \ar[d]\\
 &  \vdots \ar[d] \\
 \mathscr{I}[n-1] & \bullet}}

\subsection{Factorisation of $\infty$-Categories}
We can easily extend the previous theory to the notion of $\infty$-categories if we choose the right model for our categories.

\begin{definition}
The category $\Cc at_\infty$ is the $\infty$-category of strict infinity categories.
\end{definition}

\begin{definition}
A \emph{strict $\infty$-category} $\Cc$ is a category such that for any two objects $\Hom(X,Y) \in \Cc at_\infty$.  That is we will consider an $\infty$-category to be a category enriched over the $\infty$-category of strict $\infty$-categories.
\end{definition}

\begin{definition}
The category $\Cc at_{\leq \infty}$ has for objects strict categories of all dimensions with morphisms between objects if and only if they have the same dimension.
\end{definition}

With this type of model considered, we can calculate the factorisation of a strict $\infty$-category, although the main difference will be that the factorisation tree will never terminate, it will be infinite.

\section{The Factorisation Tree in the Tree Category $\Omega$}
\subsection{The Category of Finite Rooted Trees}
We now look at the factorisation tree of a category in a more combinatorial manner. For this we consider their embedding into the category of finite rooted trees which we briefly review here (see \cite{dendroidal2}).

\begin{definition}
A \emph{finite symmetric rooted tree} is a finite poset $(T, \leq)$ such that it satisfies the following:

\begin{itemize}
\item There is a bottom element
\item For each element $e \in T$, the set $\{y \in T | y \leq e\}$ is a linear order under $\leq$
\item It is equipped with a subset $L \subset max(T)$ of the maximal elements of $T$.
\end{itemize}

The category of all all finite symmetric rooted trees is the tree category $\Omega$, where the morphisms are generated by face and degeneracy maps \cite{moerdijk}.
\end{definition}

\begin{notation}
Let $((T,\leq),L)$ be a tree, then:

\begin{itemize}
\item An element $e \in T$ is called an \emph{edge} of the tree
\item The bottom element is called the \emph{root} of the tree
\item An edge in $L \subset T$ is a \emph{leaf} of the tree
\item An edge which is either a leaf or the root is called an \emph{outer edge}
\item An edge which is neither a leaf nor the root is called an \emph{inner edge}
\item Given a non-leaf edge $e$, another edge $e'$ is called an \emph{incoming edge} if it is a direct successor of $e$.  We denote by $in(e)$ the set of incoming edges of $e$
\item Given a non-leaf edge $e$, the subset $v_e := \{e\} \cup in(e)$ is called a \emph{vertex} of $(T, \leq)$, We say that $e$ is the \emph{outgoing edge} of $v$, and we get $in(v_e) := in(e)$
\end{itemize}
\end{notation}

\begin{remark}
Given a finite rooted tree we can obtain a corresponding symmetric operad over $\mathscr{S}et$.  Therefore it follows that we have an inclusion $\Omega \hookrightarrow \mathscr{O}perad$ of the category of finite rooted trees to the category of symmetric operads (see \cite{weissop}).
\end{remark}

\subsection{Factorisation Trees to Finite Rooted Trees}
We now consider a method of embedding factorisation trees into $\Omega$.  We begin by considering degenerate branches of the factorisation tree

\begin{definition}
Let $\mathbb{F}(\mathscr{C})$ be the factorisation tree of a strict $n$-category $\mathscr{C}$.  If $\Hom(X_i,X_j) \in \mathscr{C}[k]$ is the identity $(n-k)$-category, then we say that this element and its successive shifts are a \emph{degenerate branch}.
\end{definition}

\begin{example}
The first example is that of the trivial $n$-category. We saw in an earlier example that the shape of the factorisation of the trivial $n$-category was a single line with $n$ vertices, in this case, the whole branch is degenerate.
\end{example}

\begin{definition}
Denote by $\mathbb{F}act$ the category whose objects are factorisations of categories, and whose morphisms are generated by deleting or inserting categories at vertices.
\end{definition}

\begin{remark}
For an overview of how face and degeneracy maps work in $\Omega$ see \cite{dendroidal2}.
\end{remark}

\begin{remark}
We can consider a factorisation to be a functor $\Cc at_{\leq \infty} \to \mathbb{F}act$.
\end{remark}

\begin{definition}
Let $\Cc$ be a strict $n$-category with factorisation $\mathbb{F}(\Cc)$, we define a quotient map on degenerate branches, where the first vertex of the degeneracy remains, giving it an leaf edge.  We denote this quotient map $Q : \mathbb{F}(\Cc) \to \mathbb{F}(\Cc) / (id)$.
\end{definition}

We now wish to consider the factorisation trees as finite rooted trees. This requires a forgetful functor $u: \mathbb{F}act \to \Omega$, which forgets the category at each vertex.

\begin{definition}
Let $\Cc$ be a strict $n$-category with factorisation $\mathbb{F}(\Cc)$, then the \emph{shape} of $\Cc$ is the tree given by $\mathbb{S}(\Cc)=u(\mathbb{F}(\Cc)/id) \in \Omega$
\end{definition}

\begin{remark}
It should be noted that unlike the factorisation itself, the shape of a category is not unique, there will be infinitely many categories with the same shape, for example all 1-categories have the same shape which is just a single vertex.
\end{remark}

\begin{example}
The identity categories of all dimensions greater than 1 have the same shape. This is because the single branch is degenerate so we quotient out by it and we are left with this shape

\centerline{\xymatrix{ \bullet \ar@{-}[d]\\
&}}
\end{example}

Recall that the simplex category $\Delta$ is a full subcategory of $\Omega$ \cite{moerdijk}. We want to know what families of category have shapes in $\Delta$.  These will be categories that have only one morphism in each dimension apart from the $n$-morphisms which must be non trivial. This gives us a countably infinite set of categories which have shape $[n] \in \Delta$.

\begin{definition}
We denote by $\Cc_{\Delta[n]}$ the collection of $n$-categories which have shape $[n] \in \Delta$, and call them the \emph{categories of $[n]$-shape}.
\end{definition}

\begin{example}
 The following image represents a category in $\Cc_{\Delta[3]}$ where the different line dashings represent different dimensions of morphisms
\vspace{5mm}

\centerline{
\includegraphics[scale=0.08]{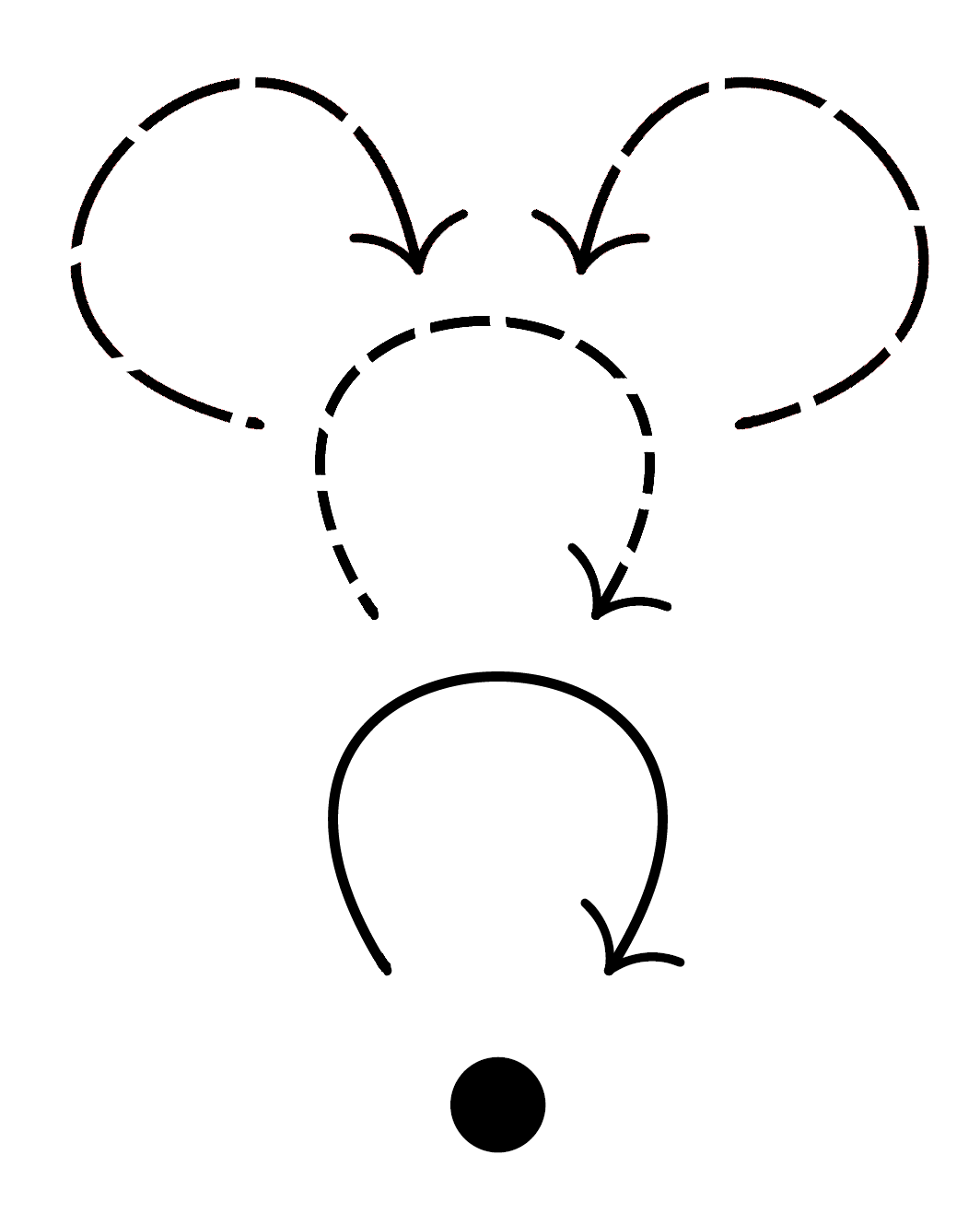}}

\vspace{5mm}
\end{example}

\begin{remark}
For the shape of a category to be truly a finite rooted tree we must specify the root and the direction. This follows quite naturally, the root would be an inward edge on the initial vertex (this is the one obtained from $\Cc$), and the direction would follow the only possible route.
\end{remark}

%%given T in Omega, is there C such that S(C)=T?
%% categories that are al ltrivial upto the n morhpsism would give lines like we would have in the simplex cat

We now briefly discuss how operations in $\Omega$ affect the shape of a category.  If we consider the categories of simplex shape $\Cc_{\Delta[n]}$, then this is asking how do the face and degeneracy maps affect the category?  The face map $[n] \to [n-1]$ removes a vertex from the tree and merging the two edges which were incident to the vertex and this gives us the shape of $\Cc_{\Delta[n-1]}$ as expected. We have a similar process for the degeneracy maps which gives us a category of one dimension higher.

\section{Cohomology Theories for Strict $n$-Categories}
\subsection{Thomason Cohomology}
We now use Thomason cohomology as introduced in \cite{galvez}. This is a cohomology theory for small categories using general coefficient systems.  Using the factorisation system that we have developed, we apply the tools of Thomason cohomology to define cohomology for strict $n$-categories.  We begin by recalling some definitions from \cite{galvez}.

\begin{definition}
Let $\mathscr{C}$ be a small category, and $T:\Delta/ \mathscr{C} \to \mathscr{A}b$ be a Thomason natural system with values in the category of abelian groups.  The \emph{Thomason cochain complex} $C^\ast_{Th}(\mathscr{C},T)$ is defined as

\begin{center}
\[C^n_{Th}(\mathscr{C},T)= \prod_{([n],f)\in \Delta / \mathscr{C} } T(f) \]
\end{center}

For $n \geq 0$, we have the differential

\begin{center}
\[ d= \sum^{n+1}_{i=1} (-1)^i d_i : \prod_{[n] \to^f \mathscr{C}} T(f) \to \prod_{[n+1] \to^g \mathscr{C}} T(g) \]
\end{center}

induced by the coface maps.

The \emph{$n$-th Thomason cohomology} of $\Cc$ with coefficients $T$ is defined as $H^n_{Th}(\mathscr{C},T):=H^n(C^\ast_{Th}(\mathscr{C},T),d)$
\end{definition}

\begin{remark}
In the definition above we used $\mathscr{A}b$, the category of abelian groups, however the definition works with any abelian category $\Aa$ with some conditions (see \cite{galvez}).
\end{remark}

We now use this, and our factorisation plane to define the Thomason cohomology, but first we introduce some notation

\begin{definition}
Let $\Cc$ be a strict $n$-category with factorisation plane $\mathbb{P}(\Cc)$. We denote by $\mathscr{C}[i,j]$ the category that is placed $i$-th vertically from the top and $j$-th horizontally from the left of $\mathbb{P}(\Cc)$.
\end{definition}

\begin{example}
Consider a category with the following factorisation
\vspace{5mm}

\centerline{\xymatrix{ &&& \bullet \ar[dl] \ar[dr] \\
& & \bullet \ar[dl] \ar[dr] & & \bullet \ar[d] \\
& \bullet & & \star &  \bullet}}
\vspace{5mm}

Then the category denoted by $\star$ would be $\Cc[3,2]$

\end{example}

\begin{definition}
Let $\Cc$ be a strict $n$-category with factorisation plane $\mathbb{P}(\Cc)$.  Then a Thomason natural system on $\Cc$ is a collection of functors $T_{[i,j]}:\Delta/( \Cc[i,j] )\to \Aa b$
\end{definition}

\begin{definition}
Let $\Cc$ be a strict $m$-category with factorisation plane $\mathbb{P}(\Cc)$.  Then the Thomason cohomology group of $\Cc$ is the strict $m$-group whose $k$-morphisms are the collection are groups $H^n_{Th}(\mathscr{C}[k,j],T_{[k,j]})$ where $j$ ranges over the obvious domain.  We attach the $k$ morphisms to the $k-1$ morphisms by respecting the shape of the tree. That is, we attach $H^n_{Th}(\mathscr{C}[k,j],T_{[k,j]})$ as $k$-morphisms of $H^n_{Th}(\mathscr{C}[k-1,j'],T_{[k-1,j']})$ if there is an edge from $\mathscr{C}[k-1,j']$ to $\mathscr{C}[k,j]$ in $\mathbb{P}(\Cc)$.

We will denote the $m$-group $n$-th cohomology by $H^{m,n}_{Th}(\mathscr{C},T)$
\end{definition}

\begin{remark}
We can easily apply the above procedure to the class of $\infty$-categories that were discussed earlier, in this case we would get a strict $\infty$-group for the cohomology.
\end{remark}

We can use the functorial properties of the cohomology theory and the factorisations to give some results about the cohomology of higher categories.

\begin{proposition}

Let $\Cc$ and $\Dd$ be strict $n$-categories, and $(\phi,\psi)$ a pair of adjoint strict $n$-functors

\begin{center}
$ \phi : \Dd \rightleftarrows \Cc : \psi$
\end{center}

Then there is a natural isomorphism $H^{n,\ast}_{Th}(\Dd,\phi^\ast(T)) \cong H^{n,\ast}_{Th(n)}(\Cc,T)$
\end{proposition}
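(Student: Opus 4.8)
The plan is to bootstrap from the $1$-categorical statement, using the factorisation plane to reduce to it vertex by vertex. The classical input is that for ordinary small $1$-categories an adjunction $\phi \dashv \psi$ induces an isomorphism on Thomason cohomology: the unit $\eta : \one \Rightarrow \psi\phi$ and counit $\epsilon : \phi\psi \Rightarrow \one$ are natural transformations, and a natural transformation between functors of $1$-categories induces, via the simplicial homotopy it determines on the slices $\Delta/(-)$, a chain homotopy between the maps it induces on Thomason cochain complexes (homotopy invariance of Thomason cohomology, \cite{galvez}; compare the Baues--Wirsching case \cite{baues}). Hence $\phi^\ast$ and $\psi^\ast$ are mutually inverse isomorphisms $H^\ast_{Th}(\Dd, \phi^\ast T) \cong H^\ast_{Th}(\Cc, T)$ when $n=1$.

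For general $n$ I would promote the adjunction to the factorisation planes. By the Corollary above, $\phi$ and $\psi$ induce functors $\phi^\star : \pp(\Dd) \to \pp(\Cc)$ and $\psi^\star : \pp(\Cc) \to \pp(\Dd)$. The hom-adjunction furnishes isomorphisms $\Hom_\Cc(\phi d, c) \cong \Hom_\Dd(d, \psi c)$ of strict $(n-1)$-categories, and, the unit and counit being strict $n$-natural transformations, after applying the shift map repeatedly and passing to homotopy categories one obtains at every vertex of the two factorisation planes an adjunction of $1$-categories, compatible with the attaching maps (edges) of the trees because $\phi^\star$ and $\psi^\star$ are morphisms in $\mathbb{F}act$. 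The pulled-back system $\phi^\ast T$ assigns to a vertex $\Dd[k,j']$ the composite of $T_{[k,j]}$ with the functor $\Delta/(\Dd[k,j']) \to \Delta/(\Cc[k,j])$ coming from $\phi^\star$, where $\Cc[k,j] = \phi^\star(\Dd[k,j'])$. Applying the case $n=1$ at each vertex and assembling the resulting isomorphisms level by level in accordance with the $m$-group structure used to define $H^{n,\ast}_{Th}$ then produces the claimed isomorphism of strict $n$-groups, and naturality in $T$ is inherited vertexwise.

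The main obstacle is that an adjunction, unlike an equivalence, need not induce a bijection on object sets, so $\pp(\Cc)$ and $\pp(\Dd)$ typically have different shapes and ``vertex by vertex'' cannot mean a literal pairing of vertices. The honest route is to treat $\phi^\star$ and $\psi^\star$ as morphisms of factorisation planes, note that at each fixed level $k$ they induce chain maps on the corresponding Thomason cochain data while the shifted unit and counit induce chain homotopies making these maps mutual inverses up to homotopy, and only then pass to cohomology. Along the way one must check that strict $n$-natural transformations restrict cleanly to the hom-$(n-1)$-categories and survive passage to homotopy categories --- this is routine but is precisely where the strictness hypotheses are used --- and one must pin down the coefficient system $\phi^\ast(T)$ on each vertex of $\pp(\Dd)$ so that it matches the notation $T_{[i,j]}$ of the definition of a Thomason natural system.
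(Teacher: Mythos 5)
Your proposal follows essentially the same route as the paper's own (much terser) proof: lift the adjoint pair $(\phi,\psi)$ to $\phi^\star,\psi^\star$ on the factorisation planes via the earlier corollary, and then invoke the $1$-categorical results of \cite{galvez} on Thomason cohomology and adjunctions vertexwise/levelwise. Your additional observation that an adjunction need not match up the vertices of $\pp(\Cc)$ and $\pp(\Dd)$ (since it is not an equivalence) is a genuine point of care that the paper's proof simply glosses over, and your proposed fix --- working with chain maps and chain homotopies at each level rather than a literal pairing of vertices --- is a reasonable way to make the argument honest.
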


\begin{proof}
Using corollary $1.14$ we can lift $\phi$ and $\psi$ to $\phi^\star$ and $\psi^\star$ respectively. These are a pair of adjoint functors between $\mathbb{P}(\Cc)$ and $\mathbb{P}(\Dd)$ which act on each node in the factorisaion tree in the obvious way.  Then using results from \cite{galvez} concerning the functorial properties of Thomason cohomology, we can conclude our result.
\end{proof}

Using the methods outlined in \cite{galvez}, it is possible to give similar definitions for Baues-Wirsching cohomology and other classical cohomology theories for strict $n$-categories.

\section{Example: A $3$-Groupoid}
\subsection{Factorisation Tree and Plane}
In this section we will be considering a strict 3-groupoid $\Cc$ which is the following category

\centerline{
\includegraphics[scale=0.08]{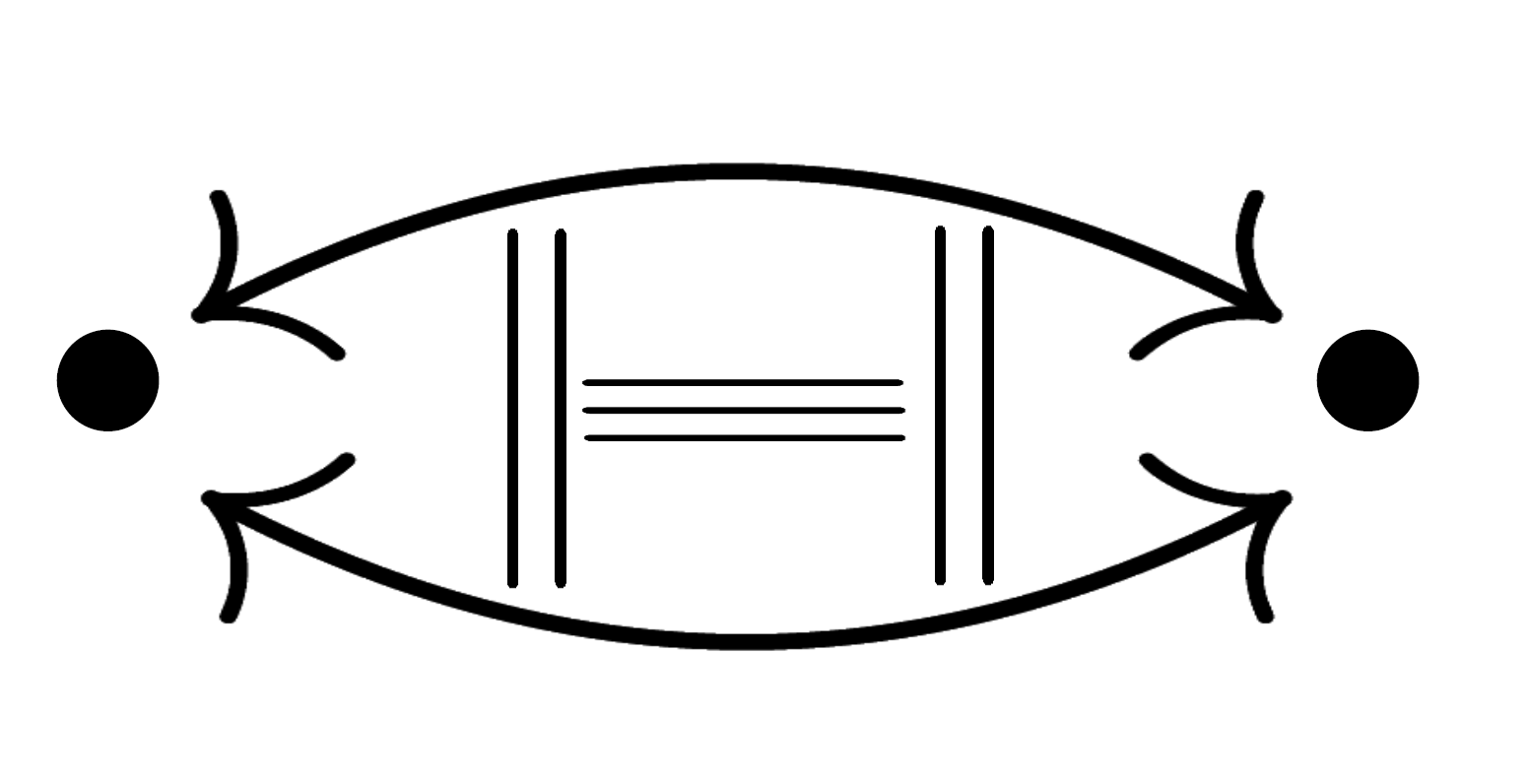}}

Here the double lines are 2-morphisms and the triple line is a 3-morphism.  The identity 1-, 2- and 3-morphisms are not included in the image.  The first step in calculating the factorisation of $\Cc$ is to calculate $\Cc[1]$.  As $\Cc$ is a groupoid which is fully connected with two objects, we have $2^2=4$ 2-categories in $\Cc[1]$. Let us call the left object $X_1$ and the right object $X_2$

Begin by noting that $\Hom(X_1,X_1)=\Hom(X_2,X_2)=\mathscr{I}_2$, where $\mathscr{I}_2$ is the identity 2-category.  Also, because of the symmetry of the category we will have that $\Hom(X_1,X_2)=\Hom(X_2,X_1)$, we just need to find out what 2-category this is. An easy way to do this is to regard the morphisms as the new objects, and then the 2-morphisms as the new 1-morphisms and so on. It follows that $\Hom(X_1,X_2)$ is the following category

\vspace{5mm}

\centerline{
\xymatrix{\mathscr{C}'= &\bullet\rtwocell{=} & \bullet \ltwocell{=} }}
\vspace{5mm}

These categories make up $\Cc[1]$, we now need to work out the shift of each of these categories so that we can calculate $\Cc[2]$. Clearly $\mathscr{I}_2[1]=\mathscr{I}_1$, so we need only consider $\Cc'[1]$ in any detail.  As with $\Cc$ we have two objects which are connected, so we will get four 1-categories in $\Cc'[1]$.  If we label the left object of $\Cc'$ $Y_1$ and the right object $Y_2$, then we have that $\Hom(Y_1,Y_1)=\Hom(Y_2,Y_2)=\mathscr{I}_1$, and $\Hom(Y_1,Y_2)=\Hom(Y_2,Y_1)=\Cc''$ where $\Cc''$ is given by the following category

\vspace{5mm}

\centerline{
\xymatrix{\mathscr{C}''= &\bullet \ar[r] & \bullet \ar[l] }}
\vspace{5mm}

We now have all of the parts required to derive the factorisation of $\Cc$. $\mathbb{F}(\Cc)$ is given by the following tree:

\vspace{5mm}

\centerline{
\xymatrix{\Cc &&&&&& \Cc \ar[ddlllll] \ar[ddrrrrr] \ar[ddll] \ar[ddrr]\\
\\
\Cc[1] & \mathscr{I}_2 \ar[dd] &&& \Cc' \ar[ddll] \ar[ddl] \ar[dd] \ar[ddr]  &&&& \Cc' \ar[ddl] \ar[dd] \ar[ddr] \ar[ddrr] &&& \mathscr{I}_2 \ar[dd]\\
\\
\Cc[2] & \mathscr{I}_1 & \mathscr{I}_1 & \Cc'' & \Cc'' & \mathscr{I}_1 && \mathscr{I}_1 & \Cc'' & \Cc'' & \mathscr{I}_1 & \mathscr{I}_1}}
\vspace{5mm}

The next task is to find the factorisation plane of $\Cc$, so this requires us to find the homotopy categories of the involved higher categories.  Note that $h(\mathscr{I}_2)=\mathscr{I}_1$ and that $h(\Cc)=h(\Cc')=\Cc''$, so that $\mathbb{P}(\Cc)$ is the following tree

\vspace{5mm}

\centerline{
\xymatrix{h(\Cc) &&&&&& \Cc'' \ar[ddlllll] \ar[ddrrrrr] \ar[ddll] \ar[ddrr]\\
\\
h(\Cc[1]) & \mathscr{I}_1 \ar[dd] &&& \Cc'' \ar[ddll] \ar[ddl] \ar[dd] \ar[ddr]  &&&& \Cc'' \ar[ddl] \ar[dd] \ar[ddr] \ar[ddrr] &&& \mathscr{I}_1 \ar[dd]\\
\\
\Cc[2] & \mathscr{I}_1 & \mathscr{I}_1 & \Cc'' & \Cc'' & \mathscr{I}_1 && \mathscr{I}_1 & \Cc'' & \Cc'' & \mathscr{I}_1 & \mathscr{I}_1}}
\vspace{5mm}

Finally we would like to know the shape of $\Cc$. This requires us to quotient out the degenerate branches. We have two branches that are degenerate, both of which start at $\mathscr{I}_2$. When we quotient out these and use the forgetful functor (from definition 2.9) we get the following rooted tree in $\Omega$

\vspace{5mm}

\centerline{
\xymatrix{ &&&&& \ar[d]\\
&&&&& \bullet \ar[ddlllll] \ar[ddrrrrr] \ar[ddll] \ar[ddrr]\\
\\
  \bullet \ar[dd] &&& \bullet \ar[ddll] \ar[ddl] \ar[dd] \ar[ddr]  &&&& \bullet \ar[ddl] \ar[dd] \ar[ddr] \ar[ddrr] &&& \bullet \ar[dd]\\
\\
 & \bullet & \bullet & \bullet & \bullet && \bullet & \bullet & \bullet & \bullet & }}
\vspace{5mm}

We now have derived $\mathbb{F}(\Cc)$, $\mathbb{P}(\Cc)$ and $\mathbb{S}(\Cc)$.  Now we consider $\mathbb{P}(\Cc)$ so we can calculate the cohomology of $\Cc$. We will not explicitly calculate each homology group, but just leave them as $H^n_{Th}(\mathscr{C}[k,j],T_{[k,j]})$, where $k=1,2,3$ and $j$ can take a range of values respective of the value of $k$. Then we have $H^{3,n}(\Cc,T)$ using the following form, if a group has an arrow to another group, then it indicates that that group acts on the group it is incident to, and the $\bullet$ represents an object.  We will also shorten $H^n_{Th}(\mathscr{C}[k,j],T_{[k,j]})$ to $H^n(k,j)$ for space purposes

\vspace{5mm}

%\centerline{\resizebox{20em}{1em}{
%\xymatrix{ H^n(3,1) \ar[d] & H^n(3,\{2,3,4,5\}) \ar[d] && H^n(3,\{6,7,8,9\}) \ar[d]& H^n(3,10) \ar[d]\\
%H^n(2,1) \ar[drr] & H^n(2,2) \ar[dr] && H^n(2,3)\ar[dl] & H^n(2,4) \ar[dll]\\
%&& H^n(1,1) \ar[d] \\
%&& \bullet}}}

\centerline{\resizebox{35em}{0.8em}{
\xymatrix{ H^n(3,1) \ar[d] & H^n(3,2) \ar[drr] & H^n(3,3) \ar[dr] & H^n(3,4) \ar[d] & H^n(3,5) \ar[dl] && H^n(3,6) \ar[dr] & H^n(3,7) \ar[d] & H^n(3,8) \ar[dl] &  H^n(3,9) \ar[dll] & H^n(3,10)\ar[d] \\
H^n(2,1) \ar[drrrrr] &&& H^n(2,2) \ar[drr] &&&& H^n(2,3)\ar[dll] &&& H^n(2,4) \ar[dlllll]\\
&&&&& H^n(1,1) \ar[d] \\
&&&&& \bullet}}}

\vspace{5mm}

\appendix
\section{Unique Embedding of 1-Categories into Strict 2-Categories}

We will first show that there is always a trivial embedding of a 1-category into a strict 2-category, such that the shift of the 2-category is our collection of 1-categories.

Suppose that we start with the 2-groupoid $\mathscr{C}$ displayed below discarding the identity 2 morphisms.
\vspace{5mm}

\centerline{
\xymatrix{\mathscr{C}= &\bullet \ar@(ul,dl) \ar@(dl,ul) \rtwocell{=} & \bullet \ltwocell{=} \ar@(ur,dr) \ar@(dr,ur)}}
\vspace{5mm}

We want to derive $\mathscr{C}[1]$, as the category is connected, and we have two objects, we will get four 1-categories from the shift. This gives us the following shift $\Cc[1]$:
\vspace{5mm}

\centerline{\xymatrix{\mathscr{C}[1]= & \bullet \ar@(ul,dl) \ar@(dl,ul) & \bullet \ar@(ul,dl) \ar@(dl,ul) \ar[r] & \bullet \ar[l] \ar@(ur,dr) \ar@(dr,ur) & & \bullet \ar@(ul,dl) \ar@(dl,ul) \ar[r] & \bullet \ar[l] \ar@(ur,dr) \ar@(dr,ur) &  \bullet \ar@(ur,dr) \ar@(dr,ur)}}

\vspace{5mm}

Now we want to show that this factorisation is not unique in the sense that another 2-category could have this collection of 1-categories as its shift.  The procedure is easy, we can just take four disjoint objects, and embed each shift category above as a set of morphisms going from the object to itself.  Then if we take the shift of this 2-category, because the objects are disjoint, we get four 1-categories and they are exactly as we require.

\noindent We will now look at another example, where there is a non-trivial embedding of a collection of 1-categories into two different 2-categories.  The 2-category in question has three objects and looks like this, where we have excluded the identity 1 morphisms as well as the identity 2-morphisms to make the diagram less cluttered.
\vspace{5mm}

\centerline{
\xymatrix{\mathscr{C}= & \bullet  \rrtwocell{=} \ddrtwocell{=} && \bullet \lltwocell{=} \ddltwocell{=}\\
& \\
&& \bullet \uurtwocell{=} \uultwocell{=}  }}

\vspace{5mm}

The shift of $\mathscr{C}$ has two different types of structure, with multiplicity 3 and 6 which we will denote by $"3 \times"$ and $"6 \times"$.  Note that we have $9 = 3^2$ categories as all objects are linked by morphisms in both directions.

\vspace{5mm}

\centerline{
\xymatrix{\mathscr{C}[1] = & 3 \times & \bullet \ar@(ul,dl) \ar@(dl,ul) & 6 \times & \bullet \ar@(ul,dl) \ar@(dl,ul) \ar[r] & \bullet \ar[l] \ar@(ur,dr) \ar@(dr,ur)}}

\vspace{5mm}

We want to find another category $\mathscr{D}$ which has $\mathscr{C}[1]=\mathscr{D}[1]$.  As we have nine segments above, it would make sense to look at a one category with five objects, which is split into three parts, with one object on its own and two sets of two, this gives us $1^2+2^2+2^2=9$ segments in the shift.  Such a category is given below, and it can be easily verified that this satisfies $\mathscr{C}[1]=\mathscr{D}[1]$, again, excluding all identity morphisms.

\vspace{5mm}

\centerline{
\includegraphics[scale=0.3]{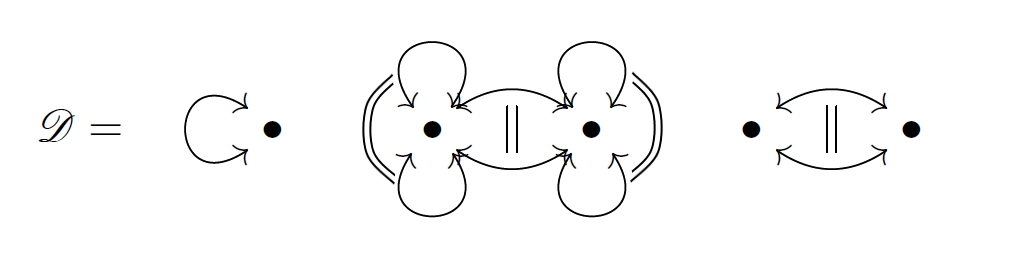}}

%\centerline{\xymatrix{ \mathscr{D}= & \bullet \ar@(ul,dl) \ar@(dl,ul) & \ar@(ld,dr) \ar@(rd,dl) \ar@(lu,ur) \ar@(ru,ul) \bullet \rtwocell{=}& \bullet \ltwocell{=} \ar@(ld,dr) \ar@(rd,dl) \ar@(lu,ur) \ar@(ru,ul) & \bullet\rtwocell{=}  & \bullet \ltwocell{=}}}

\noindent This is a non trivial embedding, so given a collection of 1-categories, we do not have a unique way to embed them into a 2-category, and therefore the entire factorisation tree cannot be determined from a subset of levels from the whole tree.  Given the homotopy category of the above level we know the number of objects in the category. This tells us how many objects we are embedding our category into, and how connected they are.  Given this information, we get a unique embedding, which leads to the reasoning that $\mathbb{P}(\Cc)$ represents the same information as $\mathbb{F}(\Cc)$.

%%%%%%%%%%%%%%%%%%%%%
%%%Bibliography%%%%%%
%%%%%%%%%%%%%%%%%%%%%

\vspace*{0.3cm} {\it Acknowledgements.}
I would like to thank Frank Neumann for his encouragement and suggestions and also for the initial motivation to study higher category theory.

\bibliographystyle{amsplain}
\bibliography{shiftbib}
\end{document}